\theoremstyle{plain}
\newtheorem{theorem}{Theorem}[section]
\newtheorem{proposition}[theorem]{Proposition}
\newcommand{\Z}{{\mathbb Z}}
\newcommand{\R}{{\mathbb R}}
\newcommand{\C}{{\mathbb C}}
\newcommand{\DTCWT}{DT-$\C$WT}
\title{The Near Shift-Invariance of the Dual-Tree Complex Wavelet Transform Revisited\\[0.2cm]
{\footnotesize This article has been accepted for publication in the}\\[-0.4cm] {\footnotesize ``Journal of Mathematical Analysis and Applications'' by Elsevier\footnote{\scriptsize This is a preprint version, prepared for posting on ArXiv. It incorporates corrections made by the
authors in response to comments by reviewers. It does not incorporate any subsequent editing changes by Elsevier made in preparation for the final, published version. It is the authors' understanding that,
under the rules posted on \url{http://www.elsevier.com/about/open-access/open-access-policies/article-posting-policy}, this posting does not infringe on subsequent copyright transfer to Elsevier. }}\\[-0.4cm]
{\footnotesize DOI: \url{http://dx.doi.org/10.1016/j.jmaa.2012.01.010}}}
\author{\small Adriaan Barri$^{\text{a},\text{b}}$,\hspace*{0.68cm} Ann Dooms$^{\text{a},\text{b}}$,\hspace*{1.05cm} Peter Schelkens$^{\text{a},\text{b}}$\\[-0.15cm]
\scriptsize\url{abarri@etro.vub.ac.be}\hspace*{0.5cm}\url{adooms@etro.vub.ac.be}\hspace*{0.5cm} \url{pschelke@etro.vub.ac.be}}
\date{\begin{flushleft}\hspace*{1.2cm}\footnotesize\noindent $^{\text{a}}$ Dept. of Electronics and Informatics (ETRO), Vrije Universiteit Brussel, Belgium\end{flushleft}
\vspace*{-0.615cm}
\begin{flushleft}\footnotesize\hspace*{1.2cm}$^{\text{b}}$ iMinds VZW, Gaston Crommenlaan 8 (box 102), B-9050 Ghent, Belgium.\end{flushleft}}
\begin{document}

\maketitle
\vspace*{-0.7cm}
\begin{abstract}
The dual-tree complex wavelet transform ({\DTCWT}) is an enhancement of the conventional discrete wavelet transform (DWT) due to a higher degree of shift-invariance and a greater directional selectivity, finding its applications in signal and image processing. This paper presents a quantitative proof of the superiority of the {\DTCWT} over the DWT in case of modulated wavelets.

\vspace*{0.4cm}

\noindent\textbf{Keywords:} dual-tree complex wavelet transform, modulated, shift-variance.
\end{abstract}

\section{Introduction}

Wavelet transforms  provide a convenient technique to perform a multiresolution analysis of finite-energy signals. The most popular instance of a wavelet transform is the critically sampled discrete wavelet transform (DWT),  which is an invertible transform that permits sparse signal decompositions at a low computational cost \cite{mallat}.

The DWT has been successfully employed in many applications, including image compression \cite{skodras}, noise reduction \cite{donoho} and speech recognition \cite{favero}. However, in the area of statistical signal processing, the DWT has proven to be less effective \cite{coifman, pesquet}. This is mainly due to the
high \emph{translation sensitivity} of the DWT: small shifts in the input signal may completely change the wavelet coefficient pattern. As a consequence, algorithms based on the DWT need to recognize and understand a wide variety of different wavelet patterns.

One way to address the shift-variance problem is to relax the critical sampling criterion of the DWT. In \cite{sarraf}, an overcomplete version of the DWT is proposed, which is most easily implemented by the ``\`a trous'' algorithm. A generalization of this algorithm is described in \cite{bradley}.  Note that this approach is computationally intensive and produces highly redundant output information, which limits its applicability. Nevertheless, since the output of the ``\`a trous'' algorithm can be computed directly from the critically sampled DWT, it is readily applied in DWT-based image and video coding systems \cite{andreopoulos1,andreopoulos2}.

In \cite{simoncelli1, simoncelli2}, Simoncelli et al. introduce the steerable pyramid, an alternative decomposition method based on Laplacian pyramids and steerable filters that achieves approximate shift-invariance. Furthermore, the steerable pyramid also gives a better directional selectivity when analyzing two-dimensional signals, which simplifies the extraction of  geometric features in images.

Another way to improve the shiftability of the DWT is by simultaneously employing two real DWT channels that form an approximate Hilbert Transform pair.  By combining the corresponding coefficients of the first and second DWT into complex-valued coefficients, we obtain a new transform, which  is called the \emph{dual-tree complex wavelet transform} ({\DTCWT}).  Compared to the steerable pyramid, the {\DTCWT} provides a better directional selectivity while having a lower redundancy factor of $2^d$ for $d$-dimensional signals. A more elaborate discussion on the design and use of the {\DTCWT} can be found in \cite{selesnick2}.

The near shift-invariance property of the {\DTCWT} has been extensively studied over the last decade \cite{chaud1, kingsbury1, kingsbury2, ozkaramanli, selesnick1,yu}.  Recently, Chaudhury and Unser \cite{chaud1} deduced an amplitude-phase representation for dual-tree complex wavelet transforms that involve \emph{modulated wavelets}, linking the multiresolution framework of the wavelet components to the frequency decomposition through Fourier Analysis. This representation provided new insights into the improved shiftability of the {\DTCWT}.

In this contribution, we build on their findings by introducing a more formal description of the DWT translation sensitivity, which will allow us to better explain the superiority of the {\DTCWT}. We finish  with a study on the decaying rate of the {\DTCWT} shift error when the translation parameter tends to zero in case of orthonormal wavelet systems.

\section{Preliminaries}
\label{sec:preliminaries}

We now introduce some definitions and notation needed to state our results in the following sections.

Given two signals $f$ and $g$ in $L^2(\R)$, we define their inner product by
$$\bigl<f,g\bigr> = \int_{\R} f(x) \overline{g(x)} dx,$$
where the bar indicates complex conjugation.
The Fourier transform of $f$ is given by
$$\hat{f}(\xi) = \int_{\R} f(x) e^{-i\xi x}dx$$
whereas the Hilbert transform $\mathcal{H}$ (HT) is characterized by the relation
$$\widehat{\mathcal{H}f}(\xi) = -i \text{sign}(\xi)\hat{f}(\xi).$$
The Hilbert transform is orthogonal to the signal, commutes with translations and positive dilatations, and $\mathcal{H}^{-1} = -\mathcal{H}$.
The translation-dilatation operator $\Xi_{j,k}$ on $\psi\in L^2(\R)$ is defined by
$$\Xi_{j,k}[\psi] = 2^{j/2}\psi(2^j\cdot-k)=\psi_{j,k}.$$

Let $\{\psi_{j,k}\}_{j,k\in\Z}$ and $\{\psi'_{j,k}\}_{j,k\in\Z}$ be two real-valued bi-orthogonal wavelet systems that form
a \emph{Hilbert transform pair}, i.e. $\psi' = \mathcal{H}\psi$. We define the wavelet coefficients of $f$ with respect to these wavelet systems by
$$a_j[k] = \bigl< f, \psi_{j,k}\bigr>\quad \text{and}\quad b_j[k] = \bigl< f, \psi'_{j,k}\bigr>$$
for every $j,k\in\Z$.
 These equations yield the following two different wavelet identities:
\begin{align*}
f = \sum_{j,k\in\Z} a_j[k] \tilde{\psi}_{j,k}\quad \text{and}\quad
f = \sum_{j,k\in\Z} b_j[k] \tilde{\psi}'_{j,k},
\end{align*}
where $\tilde{\psi}_{j,k}$ and $\tilde{\psi}'_{j,k}$ represent the dual wavelets of $\psi_{j,k}$ and $\psi'_{j,k}$ respectively.

We now introduce the \emph{complex wavelets}
\begin{align*}
\Psi_{j,k} = \frac{\psi_{j,k}+i\psi'_{j,k}}{2}\quad \text{and}\quad \tilde{\Psi}_{j,k} = \frac{\tilde{\psi}_{j,k}+i\tilde{\psi}'_{j,k}}{2}.
\end{align*}
The \emph{{\DTCWT} coefficients} are then given by
\begin{align*}
c_j[k] &= \bigl<f,\Psi_{j,k}\bigr>\\
 &= \tfrac{1}{2}(a_j[k]-ib_{j}[k])
\end{align*}
for every $j,k\in\Z$.

Recall that for dyadic wavelet transforms, the level $j$ coefficients of a shifted signal  $f(\cdot + s)$ with $s=2^{-j}m$, $m\in\Z$, can be easily predicted from the coefficients of the reference signal. In fact,
\begin{align*}
c_j^s[k] &= \bigl<f(\cdot + 2^{-j}m), \Psi_{j,k} \bigr>\\
&= \bigl< f, \Psi_{j,k}(\cdot-2^{-j}m)\bigr>\\
&= \bigl< f, \Psi_{j,k+m}\bigr>\\
&= c_j[k+m].
\end{align*}
This well-known property can be adapted for arbitrary shifts $s$ by decomposing $s$ into a dyadic number $2^{-j}m$ and some remainder $h$ with $|h|<2^{-j}$:
$$s=2^{-j}m + h.$$
Then
\begin{align*}
c_j^s[k] &= \bigl< f(\cdot +s), \Psi_{j,k} \bigr>\\
&= \bigl< f(\cdot + h), \Psi_{j,k+m} \bigr>\\
&= c_j^{h}[k+m].
\end{align*}
The adjusted shift error
$$|c_j[k+m]-c_j^s[k]| = |c_j[k+m]-c_j^{h}[k+m]|$$
is in general much smaller than the original shift error $|c_j[k]-c_j^s[k]|$.
In order to further reduce the shift error $|c_j[k+m]-c_j^{h}[k+m]|$, we will perform a phase change of $c_j[k+m]$ over an angle $\phi_h$ that partially compensates for the small shift $h$ ($|h|<2^{-j}$), so that
$$c_j^s[k] = c_j^h[k+m] \approx  e^{i\phi_h}c_j[k+m].$$

As suggested in \cite{chaud1}, we  make the assumption that the involved wavelet $\psi$ is \emph{modulated}. That is,
\begin{align*}
\psi(x) = w(x)\cos(\omega_0x+\xi_0)
\end{align*}
for $\omega_0, \xi_0>0$ where the localization window $w$ is bandlimited to $[-\Omega,\Omega]$ for some $\Omega<\omega_0$. Examples of modulated wavelets are the Shannon and Gabor wavelets.
As the orthonormal spline, resp. $B$-spline, wavelets resemble the Shannon, resp. Gabor, wavelet, they can be seen as a kind of modulated wavelets.
Using the \emph{Bedrosian identity} (see \cite{chaud1}), one can show that
\begin{align*}
\psi'(x) = w(x)\sin(\omega_0x + \xi_0).
\end{align*}
In this way, we obtain the
 identity
\begin{align*}
\Psi(x) = \frac{e^{i\xi_0}}{2} w(x)e^{i\omega_0x}.
\end{align*}

In order to examine the near shift-invariance of the {\DTCWT} based on these modulated wavelets, we thus need to minimize the error
\begin{align*}
|e^{i\phi_h}c_j[k]-c_j^h[k]|
\end{align*}
for some well-chosen angle $\phi_h\in[-\pi,\pi[$.
This \emph{phase-compensated shift error} will be compared to the ``real'' shift errors $|a_j[k]-a_j^h[k]|$ and $|b_j[k]-b_j^h[k]|$ in Section \ref{sec:phasecompensation}. The attained results (summarized in Theorem \ref{thm:phasecompensation}) suggest to take $\phi_h = 2^j\omega_0 h$, which accords with the conclusions drawn by Chaudhury in \cite[p. 127-128]{chaud3}.

We have empirically verified that $c_j^h[k]\approx e^{i2^j\omega_0h}c_j[k]$ for small shifts $h$   using the {\DTCWT} software provided by the authors of \cite{chaud2}.
In our experiments, we put $\psi=\psi_{4.5}^{8}$ and $\psi' = \psi_{5}^{8}$, the fractional B-spline wavelets of degree $\alpha=8$ and with shift parameters $\tau=4.5$,  $\tau=5$ respectively \cite{unser}. These wavelets are known to be approximately modulated. In fact, one can observe that
$$\psi(x) \approx  w(x)\cos(5.3x+5.2),$$
where $w = \sqrt{\psi^2 + \psi'^2}$. It is proven in \cite{chaud2} that $\psi$ and $\psi'$ form a Hilbert Transform pair; therefore, they determine a {\DTCWT}.

Our test signal consists of 512 uniform samples from a block function $f:[0,1]\rightarrow \R$ (see Figure \ref{fig:testsetting}), which is shifted  one place to the left. The resulting signal corresponds to the function $f^h = f(\cdot + 1/512)$.

Figure \ref{fig:testresults} compares the phase-compensated shift error $|e^{i2^j\omega_0h}c_j[k]-c_j^h[k]|$ with the optimal shift error
$$\bigl||c_j[k]|-|c_j^h[k]|\bigr|= \min_{\phi_h\in]-\pi,\pi]}|e^{i\phi_h}c_j[k]-c_j^h[k]|.$$ The graphs on the right are zoomed out so that the shift errors of the real and imaginary wavelet components can be included.
The following naming conventions are used for the considered shift-errors:
\begin{align*}
\texttt{complex-optimal} &= \bigl||c_j[k]|-|c_j^h[k]|\bigr|\\
\texttt{complex-phasecomp} &= \bigl|\text{exp}(i*2^j*5.3/512)c_j[k] - c_j^h[k]\bigr|\\
\texttt{real} &= 2|\text{Re}(c_j[k])-\text{Re}(c_j^h[k])|\\
\texttt{imag} &= 2|\text{Im}(c_j[k])-\text{Im}(c_j^h[k])|
\end{align*}

The test results show that the phase-compensated shift error is significantly smaller than the shift error of the real and imaginary wavelet components. Moreover, there is a very good fit between the phase-compensated and the optimal shift error. Therefore, we can conclude that the derived phase-compensation for the {\DTCWT} coefficients is near-optimal under small translations.

\begin{figure}[H]
\centering
\hspace*{-0.7cm}
 \includegraphics[height=5.8cm]{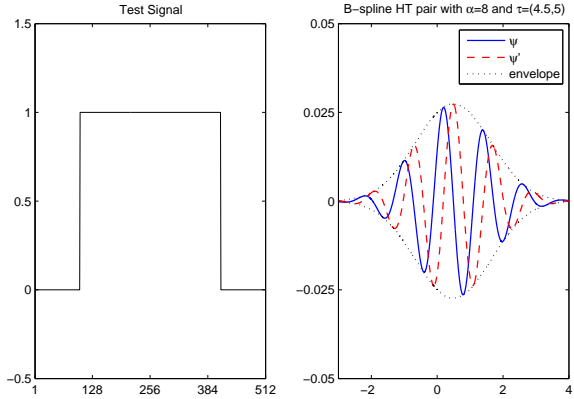}
\caption{Graphical representation of the test signal and the selected fractional $B$-spline wavelet pair.}
\label{fig:testsetting}
\end{figure}

\begin{figure}[H]
\centering
\includegraphics[height=12.5cm]{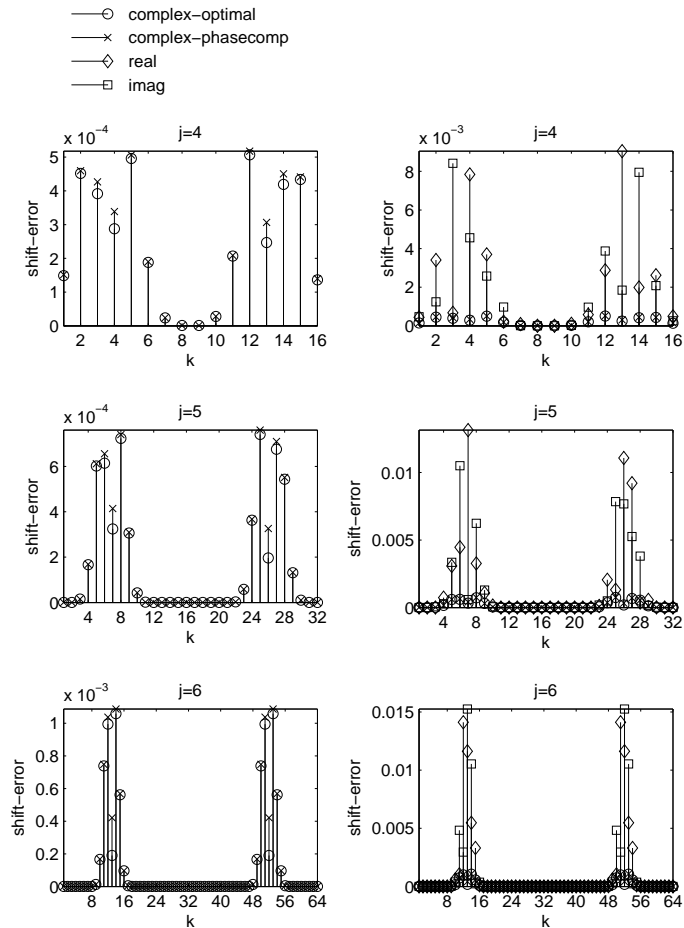}
\caption{Graphs showing the shift-errors of the {\DTCWT} and the real and imaginary wavelet components. The plots on the left compare the phase-compensated error with the optimal shift-error. The plots on the right are zoomed out so that they can show the shift errors of the real and imaginary wavelet components.}
\label{fig:testresults}
\end{figure}

\section{On the phase-compensated shift error  of the Dual-Tree Complex Wavelet Transform}
\label{sec:phasecompensation}

Let $f$ be a real-valued function in $L^2(\R)$ with {\DTCWT} coefficients $c_j[k],j,k\in\Z$, based on two \emph{modulated} wavelets $\psi$ and
$\psi'=\mathcal{H}\psi$. Denote the translates of $f$ over some real number $h$  by $f^h = f(\cdot + h)$. The {\DTCWT} coefficients of $f^h$ are given by
$$c_j^h[k] = \frac{1}{2}(a_j^h[k]-ib_j^h[k]),\quad j,k\in\Z.$$
As deduced in Section \ref{sec:preliminaries}, to study the shift error for the level $j$ coefficients, it is enough to look at $|h|<2^{-j}$.

In this section, we work towards a proof of the following theorem.

\begin{theorem}[Phase Compensation for the {\DTCWT} shift error]
\label{thm:phasecompensation}
Let $c_j[k]$, $j,k\in\Z$, be the coefficients of a real-valued function $f$ in $L^2(\R)$ with respect to a {\DTCWT} decomposition for which  the involved wavelet $\psi$ is modulated.  When $h$ is small, we have the approximate identities
\begin{align*}
\frac{|e^{i2^j\omega_0h}c_j[k] - c_j^h[k]|}{  |a_j[k]-a_j^h[k]| } \approx 0
\quad \text{and} \quad
\frac{|e^{i2^j\omega_0h}c_j[k] - c_j^h[k]|}{|b_j[k]-b_j^h[k]| } \approx 0,
\end{align*}
proving that the phase-compensated error is negligible in relation to   the shift errors of the real and imaginary wavelet components.
\end{theorem}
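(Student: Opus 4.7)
The plan is to expand every quantity in the stated ratio to first order in $h$ and exploit the modulated identity $\Psi(y) = \tfrac{e^{i\xi_0}}{2}w(y)e^{i\omega_0 y}$. The key qualitative observation is that the phase factor $e^{i2^j\omega_0 h}$ is designed precisely to cancel the carrier-induced phase in $\Psi_{j,k}(\cdot-h)$, so the {\DTCWT} wavelet difference depends only on the slowly-varying baseband window $w$, whereas differentiating the real and imaginary wavelets produces an extra factor $\omega_0$ from the sinusoidal carrier.

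Setting $g_{j,k}(x) = \tfrac{e^{i\xi_0}}{2}\,2^{j/2}w(2^jx-k)$, a direct computation yields
\begin{align*}
e^{-i2^j\omega_0 h}\Psi_{j,k}(x) - \Psi_{j,k}(x-h) = e^{-i2^j\omega_0 h}\bigl[g_{j,k}(x) - g_{j,k}(x-h)\bigr]e^{i\omega_0(2^jx-k)}.
\end{align*}
A Taylor expansion in $h$ then gives $e^{i2^j\omega_0 h}c_j[k] - c_j^h[k] = \tfrac{2^j h}{2}\langle f,\alpha_{j,k}+i\beta_{j,k}\rangle + O(h^2)$, where $\alpha_{j,k},\beta_{j,k}$ are carrier-modulated dilations of the \emph{derivative} $w'$ rather than of $w$. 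On the other side, differentiating $\psi(y)=w(y)\cos(\omega_0 y+\xi_0)$ and $\mathcal{H}\psi(y)=w(y)\sin(\omega_0 y+\xi_0)$ splits the real shift error $\langle f,\psi_{j,k}-\psi_{j,k}(\cdot-h)\rangle$ into a small $w'$-piece and a dominant $\omega_0 w$-piece, producing $a_j[k]-a_j^h[k]\approx -2^j h\omega_0\, b_j[k]$ and by symmetry $b_j[k]-b_j^h[k]\approx 2^j h\omega_0\, a_j[k]$ at leading order.

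The ratio in the theorem thus reduces at first order to a quotient of the form $|\langle f,\alpha_{j,k}+i\beta_{j,k}\rangle|/(2\omega_0|b_j[k]|)$. Using $\widehat{w'}(\xi)=i\xi\hat w(\xi)$, supported in $[-\Omega,\Omega]$ with amplitude bounded by $\Omega|\hat w(\xi)|$, the Fourier spectra of $\alpha_{j,k},\beta_{j,k}$ near $\pm 2^j\omega_0$ are at most $\Omega$ times those of the corresponding Hilbert-paired wavelets $\psi_{j,k},\mathcal{H}\psi_{j,k}$. A Parseval comparison then bounds the ratio by a quantity of order $\Omega/\omega_0$, which is strictly less than $1$ by the modulated-wavelet assumption and vanishes in the strongly modulated limit $\Omega/\omega_0\to 0$ (e.g.\ for Shannon or Gabor wavelets). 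The main obstacle will be converting the pointwise spectral bound into a bound on the actual inner products with an arbitrary $f$, since a direct Cauchy--Schwarz estimate only controls $L^2$ norms rather than individual coefficient magnitudes; the resolution is either to observe that $b_j[k]$ has its ``natural'' order whenever $f$ has nontrivial spectral content in the wavelet band, or to isolate and absorb the $O(h^2)$ remainder, which is negligible for small $h$ and does not affect the asymptotic smallness of the ratio.
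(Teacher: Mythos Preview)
Your approach is correct at the heuristic level the theorem demands, but it differs structurally from the paper's. The paper does not Taylor-expand directly. Instead, it first \emph{postulates} the DWT translation sensitivity (the constants $B_a,B_b$ in (\ref{translationsensitivity})) as an assumption, then introduces the complex ratio $R_h=(e^{i\phi_h}a_j[k]-a_j^h[k])/(e^{i\phi_h}b_j[k]-b_j^h[k])$ and proves in Proposition~\ref{prop:ratiorealcomplex} that both target ratios are bounded by constants times $|1-i/R_h|$ and $|R_h-i|$. It then shows (Propositions~\ref{prop:limWhEh} and~\ref{prop:phi}) that $R_h\approx i$ precisely when the window-derivative quantity $|W_h/E_h|$ is small; that smallness is argued heuristically via Cauchy--Schwarz, giving $\lim_{h\to 0}|W_h/E_h|\le \|f\|_2\|w'\|_2/(2\omega_0|c_j[k]|)$, which is declared negligible for ``significant'' coefficients. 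So the paper's proof is modular: an algebraic reduction to $R_h\approx i$, plus a separate estimate of $|W_h/E_h|$.

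Your direct first-order expansion captures the same mechanism --- the phase $e^{i2^j\omega_0h}$ kills the carrier, leaving only a $w'$-term in the numerator, while the denominator carries the dominant $\omega_0 w$-term --- and in fact your derivation of $a_j[k]-a_j^h[k]\approx -2^jh\omega_0 b_j[k]$ is a \emph{justification} of the translation-sensitivity postulate that the paper simply assumes. What your route buys is transparency; what the paper's $R_h$ machinery buys is a reusable intermediate object (plotted in Figure~\ref{fig:Rh}) and a cleaner separation between the algebraic reduction and the analytic smallness argument. The one genuine soft spot you flag yourself is the ``Parseval comparison'': a pointwise spectral bound $|\widehat{w'}(\xi)|\le\Omega|\hat w(\xi)|$ does \emph{not} imply $|\langle f,\alpha_{j,k}\rangle|\lesssim(\Omega/\omega_0)|b_j[k]|$ for arbitrary $f$, since $f$ could be nearly orthogonal to $\psi'_{j,k}$ but not to $\alpha_{j,k}$. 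The paper faces exactly the same obstruction and resolves it by fiat --- restricting to significant coefficients so that $|c_j[k]|/\|f\|_2$ is not small --- rather than by a spectral inequality. If you replace your Parseval step with the Cauchy--Schwarz bound $|\langle f,\alpha_{j,k}+i\beta_{j,k}\rangle|\le\|f\|_2\|w'\|_2$ and invoke the same significance assumption, your argument matches the paper's level of rigor.
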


We express the \emph{translation sensitivity} of the DWT coefficients $a_j[k]$ and $b_j[k]$ by postulating that
\begin{align}
\label{translationsensitivity}
\inf_{|h|<2^{-j}}\frac{|a_j[k]-a_j^h[k]|}{|h\, a_j[k]|} = B_a\quad \text{and}\quad \inf_{|h|<2^{-j}}\frac{|b_j[k]-b_j^h[k]|}{|h\, b_j[k]|} = B_b
\end{align}
for some values $B_a$ and $B_b$ significantly larger than zero.

\medskip

In the next proposition, we introduce the ratio $R_h$, which relates the phase-compensated shift error  $|e^{i\phi_h} c_j[k] - c_j^h[k]|$ to the shift errors of the real and imaginary wavelet components $|a_j[k]-a_j^h[k]|$ and $|b_j[k] - b_j^h[k]|$.

\begin{proposition}
\label{prop:ratiorealcomplex}
Let $\phi_h\in [-\pi,\pi[$ for every $h\in\R$ with $|h|<2^{-j}$.
Define
\begin{align}
\label{Rh}
R_h = \frac{e^{i\phi_h}a_j[k]-a_j^h[k]}{e^{i\phi_h}b_j[k]-b_j^h[k]}.
\end{align}
Suppose that the constraint in (\ref{translationsensitivity}) holds for $B_a$ and $B_b$ significantly larger than zero. Then
\begin{align}
\label{DTCWTboundreal}
\frac{|e^{i\phi_h}c_j[k]-c_j^h[k]|}{|a_j[k]-a_j^h[k]|} &\leq (1+\Phi/B_a)\frac{|1-i/R_h|}{2}
\intertext{and}
\label{DTCWTboundimag}
\frac{|e^{i\phi_h}c_j[k]-c_j^h[k]|}{|b_j[k]-b_j^h[k]|} &\leq (1+\Phi/B_b)\frac{|R_h-i|}{2},
\end{align}
where
$\Phi = \sup_{|h|<2^{-j}}\bigl|(e^{i\phi_h}-1)/h\bigr|$.

Note that $\Phi$ is finite if and only if $\limsup_{h\rightarrow 0} |\phi_h/h| < \infty$.
\end{proposition}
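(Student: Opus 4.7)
The plan is to reduce the phase-compensated complex shift error to the two real/imaginary ``shifted'' errors that already appear in the definition of $R_h$, and then transfer those back to the honest shift errors $|a_j[k]-a_j^h[k]|$ and $|b_j[k]-b_j^h[k]|$ via the translation-sensitivity bounds (\ref{translationsensitivity}) and the modulus bound on $e^{i\phi_h}-1$ encoded by $\Phi$.

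Concretely, I first set $A := e^{i\phi_h}a_j[k]-a_j^h[k]$ and $B := e^{i\phi_h}b_j[k]-b_j^h[k]$, so that by linearity of the wavelet pairing and the definition $c_j[k]=\tfrac12(a_j[k]-ib_j[k])$ one gets
\begin{equation*}
e^{i\phi_h}c_j[k]-c_j^h[k] \;=\; \tfrac{1}{2}\bigl(A - iB\bigr).
\end{equation*}
The quantity $R_h$ is exactly $A/B$, so I would factor out $A$ or $B$ according to which inequality I want to prove. This gives the two ``kernel'' identities
\begin{equation*}
|A-iB| \;=\; |A|\,\bigl|1-i/R_h\bigr| \;=\; |B|\,\bigl|R_h - i\bigr|,
\end{equation*}
leading to $|e^{i\phi_h}c_j[k]-c_j^h[k]| = \tfrac12|A||1-i/R_h| = \tfrac12|B||R_h-i|$. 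This is the purely algebraic step and it turns the problem into comparing $|A|$ with $|a_j[k]-a_j^h[k]|$ (and $|B|$ with $|b_j[k]-b_j^h[k]|$).

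Next, I would split $A$ via the triangle inequality,
\begin{equation*}
|A| \;\le\; \bigl|e^{i\phi_h}-1\bigr|\,|a_j[k]| \;+\; |a_j[k]-a_j^h[k]|,
\end{equation*}
and then convert each term. The first term is controlled by the definition of $\Phi$, which yields $|e^{i\phi_h}-1| \le \Phi|h|$; the factor $|h|\,|a_j[k]|$ is in turn controlled by (\ref{translationsensitivity}), which gives $|h|\,|a_j[k]| \le |a_j[k]-a_j^h[k]|/B_a$. Substituting,
\begin{equation*}
|A| \;\le\; \bigl(1+\Phi/B_a\bigr)\,|a_j[k]-a_j^h[k]|,
\end{equation*}
and multiplying by $\tfrac12|1-i/R_h|$ yields (\ref{DTCWTboundreal}). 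The bound (\ref{DTCWTboundimag}) is obtained by the exact same argument applied to $B$ and to the translation-sensitivity constant $B_b$.

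There is no genuine obstacle here; the argument is a two-line manipulation once the right quantities $A$, $B$ are named. The only point that requires a little care is checking that the denominators $B$ (resp.\ $A$) that appear when factoring out $R_h$ are nonzero, but this is precisely guaranteed by the hypothesis that $B_b$ (resp.\ $B_a$) is significantly larger than zero, which forces $|b_j[k]-b_j^h[k]|>0$ (hence $|B|>0$ up to the $\Phi$-correction) whenever $a_j[k], b_j[k]\neq 0$; the degenerate cases where a coefficient vanishes can be absorbed into the statement by passing to limits or treated separately as trivial.
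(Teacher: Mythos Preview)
Your proposal is correct and follows essentially the same route as the paper's own proof: you factor $e^{i\phi_h}c_j[k]-c_j^h[k]=\tfrac12(A-iB)$, pull out $|1-i/R_h|$ (resp.\ $|R_h-i|$), and then bound $|A|/|a_j[k]-a_j^h[k]|$ by $1+\Phi/B_a$ via the triangle inequality combined with the definitions of $\Phi$ and $B_a$. The only cosmetic difference is that the paper writes out the ratio $\frac{|e^{i\phi_h}a_j[k]-a_j^h[k]|}{|a_j[k]-a_j^h[k]|}$ directly rather than naming $A$ and $B$, but the logic is identical.
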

\begin{proof}
We prove (\ref{DTCWTboundreal}); the proof of (\ref{DTCWTboundimag}) is similar.
Observe that
\begin{align*}
|e^{i\phi_h}c_j[k] - c_j^h[k]|
&= \frac{1}{2}\Bigl|(e^{i\phi_h}a_j[k]-a_j^h[k] )- i(e^{i\phi_h}b_j[k]-b_j^h[k])\Bigr|\\
&= \frac{1}{2}\Bigl|(1-i/R_h)(e^{i\phi_h}a_j[k] - a_j^h[k])\Bigr|\\
&= \frac{1}{2}|1-i/R_h|\, |e^{i\phi_h}a_j[k] - a_j^h[k]|.
\end{align*}
Dividing $|e^{i\phi_h}c_j[k]-c_j^h[k]|$ by $|e^{i\phi_h}a_j[k] - a_j^h[k]|$ results in
\begin{align*}
\Biggl|\frac{e^{ i\phi_h}c_j[k]- c_j^h[k]}{ e^{i\phi_h}  a_j[k]-a_j^h[k]}\Biggr| &= \frac{|1- i/R_h|}{2}.
\end{align*}
On the other hand, we have
$$\frac{|e^{i\phi_h}-1|\, |a_j[k]|}{|a_j[k]-a_j^h[k]|} \leq \Phi/B_a,$$
which implies that
\begin{align*}
\frac{|e^{i\phi_h}a_j[k] -a_j^h[k]|}{|a_j[k]-a_j^h[k]|}
&\leq \frac{|a_j[k]-a_j^h[k]| + |e^{i\phi_h}-1|\, |a_j[k]|}{|a_j[k]-a_j^h[k]|}\\
&\leq  1 + \Phi/B_a.
\end{align*}
Hence,
\begin{align*}
\frac{|e^{i\phi_h}c_j[k]-c_j^h[k]|}{|a_j[k]-a_j^h[k]|} &=\frac{|e^{i\phi_h}a_j[k] -a_j^h[k]|}{|a_j[k]-a_j^h[k]|} \times \frac{|1- i/R_h|}{2}  \\
&\leq (1 + \Phi/B_a)\frac{|1- i/R_h|}{2}. \tag*{\qedhere}
\end{align*}
\end{proof}

\noindent This shows that the phase-compensated shift error $|e^{i\phi_h}c_j[k]-c_j^h[k]|$ becomes  smaller as the ratio $R_h$ approaches to $i$.

\medskip

The perturbed coefficients $c_j^h[k]$ can be expressed more explicitly as
\begin{align*}
c_j^h[k] &= \left<f(\cdot + h)   , \Psi_{j,k}\right>\\
&= \left<f, \Psi_{j,k}(\cdot - h)\right>\\
&= \frac{e^{-i\xi_0}}{2} \int_{\R} f(x) \Xi_{ j,k}[w(x-2^jh)e^{-i\omega_0(x-2^jh)}]dx.
\end{align*}
Observe that both the
sinusoid and the localization window of the modulated wavelet contribute to the perturbation of $c_j^h[k]$. Their individual roles on the {\DTCWT} shiftability can be described using the variables
\begin{align*}
E_h = \int_\R f(x) \Xi_{j,k}\Bigl[w(x)\bigl(e^{-i\omega _0x} - e^{-i\omega_0 (x-2^jh)}\bigr)\Bigr]dx
\end{align*}
and
\begin{align*}
W_h = \int_{\R} f(x)\Xi_{j,k}\Bigl[\bigl(w(x)-w(x-2^jh)\bigr)e^{- i\omega_0 x}\Bigr]dx.
\end{align*}

\begin{proposition}
\label{prop:limWhEh}
If the localization window $w$ is differentiable, then $|W_h/E_h|$ converges as $h\rightarrow 0$. More precisely, we have
the identity
\begin{align*}
\lim_{h\rightarrow 0} |W_h/E_h| &= \frac{1}{2\omega_0|c_j[k]|}\biggl|\int_{\R}f(x)\Xi_{j,k}[\tfrac{dw}{dx}(x)e^{-i\omega_0x}]dx
\biggr|.
\end{align*}
\end{proposition}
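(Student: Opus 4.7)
The strategy is to show that both $E_h$ and $W_h$ are asymptotically linear in $h$ as $h\to 0$ and then take the quotient of the leading coefficients. Concretely, I would divide each by $2^jh$ and pass to the limit. For $E_h/(2^jh)$, the sinusoid difference factors as
\[
w(x)\bigl(e^{-i\omega_0 x} - e^{-i\omega_0(x-2^jh)}\bigr) \;=\; w(x)\,e^{-i\omega_0 x}\cdot\bigl(1 - e^{i\omega_0 2^j h}\bigr),
\]
so the scalar $(1-e^{i\omega_0 2^j h})/(2^j h)$ pulls out of the integral and converges to $-i\omega_0$. Hence
\[
\frac{E_h}{2^j h} \;\longrightarrow\; -i\omega_0\int_{\R} f(x)\,\Xi_{j,k}\bigl[w(x)e^{-i\omega_0 x}\bigr]dx.
\]
Using the modulated-wavelet identity $\Psi(x)=(e^{i\xi_0}/2)\,w(x)e^{i\omega_0 x}$ and the fact that $w$ is real, I have $w(x)e^{-i\omega_0 x}=2e^{i\xi_0}\overline{\Psi(x)}$; since $\Xi_{j,k}$ commutes with complex conjugation and with scalar multiplication, the integral above equals $2e^{i\xi_0}\langle f,\Psi_{j,k}\rangle=2e^{i\xi_0}c_j[k]$. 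Taking absolute values yields $|E_h/(2^j h)|\to 2\omega_0|c_j[k]|$.

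For $W_h/(2^j h)$, I rewrite
\[
\frac{W_h}{2^j h} \;=\; \int_{\R} f(x)\,\Xi_{j,k}\!\left[\frac{w(x)-w(x-2^jh)}{2^j h}\,e^{-i\omega_0 x}\right]dx,
\]
and use the differentiability of $w$ to identify the pointwise limit of the bracketed integrand as $\Xi_{j,k}[w'(x)\,e^{-i\omega_0 x}]$. Combining this with the previous step then gives
\[
\bigl|W_h/E_h\bigr| \;\longrightarrow\; \frac{\bigl|\int_{\R}f(x)\,\Xi_{j,k}[w'(x)e^{-i\omega_0 x}]dx\bigr|}{2\omega_0\,|c_j[k]|},
\]
which is the claimed identity.

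The main obstacle is justifying the interchange of limit and integral for $W_h$; the analogous step for $E_h$ is trivial because only a scalar depends on $h$. The cleanest route is to exploit the modulated-wavelet hypothesis: since $w$ is bandlimited to $[-\Omega,\Omega]$, by Paley--Wiener $w$ extends to an entire function with $w,w'\in L^2(\R)\cap L^\infty(\R)$. Then, by $L^2$-continuity of translation and the mean value estimate $\|w(\cdot)-w(\cdot-t)\|_\infty\le |t|\,\|w'\|_\infty$, the difference quotients converge to $w'$ in $L^2(\R)$; equivalently, $\Xi_{j,k}[(w(\cdot)-w(\cdot-2^jh))e^{-i\omega_0\cdot}/(2^jh)]$ converges in $L^2$ to $\Xi_{j,k}[w'(\cdot)e^{-i\omega_0\cdot}]$ since $\Xi_{j,k}$ is an isometry and $|e^{-i\omega_0 x}|=1$. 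Continuity of the $L^2$ inner product (Cauchy--Schwarz) then delivers the desired convergence of $W_h/(2^jh)$, and the proposition follows.
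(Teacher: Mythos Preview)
The paper does not supply a proof for this proposition; it is stated without argument, and the only related computation given in the surrounding text is the identity $|E_h| = 2\,|e^{i2^j\omega_0 h}-1|\,|c_j[k]|$ (equation~\eqref{Eheq}), which the paper records as ``Clearly.'' Your derivation of this identity via the factorisation $w(x)e^{-i\omega_0 x}=2e^{i\xi_0}\overline{\Psi(x)}$ reproduces exactly that, and dividing by $2^jh$ gives the denominator $2\omega_0|c_j[k]|$ of the stated limit.

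Your proof is correct. The only point worth flagging is that the proposition as \emph{stated} hypothesises merely that $w$ is differentiable, whereas your justification of the limit exchange for $W_h$ invokes the bandlimitedness of $w$ (via Paley--Wiener) to obtain $L^2$-convergence of the difference quotients. This is not a gap in the present context---the modulated-wavelet framework of the paper does assume $\hat{w}$ is supported in $[-\Omega,\Omega]$, so the extra hypothesis is available---but it means your argument proves slightly less than the literal statement. A proof under bare differentiability (plus $w,w'\in L^2$, say) would need a separate dominated-convergence or density argument; the paper evidently did not intend that level of generality, and your use of the ambient bandlimited assumption is the natural reading. Your Fourier-side domination $|\hat{w}(\xi)(1-e^{-i\xi t})/t|\le \Omega|\hat{w}(\xi)|$ (implicit in your Paley--Wiener step) is the clean way to make the $L^2$ convergence rigorous.
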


\noindent By the Cauchy-Schwarz inequality, we get that 
\begin{align*}
\lim_{h\rightarrow 0} |W_h/E_h| \leq \frac{1}{2\omega_0|c_j[k]|}\bigl\|f\bigr\|_2 \bigl\|\tfrac{dw}{dx}\bigr\|_2.
\end{align*}

This expression reveals three important parameters that influence the asymptotic behavior of $|W_h/E_h|$ as $h\rightarrow 0$: firstly, the frequency $\omega_0$ of the modulated wavelet; secondly, the $L^2$-norm of $\frac{dw}{dx}$; and thirdly, the significance of the {\DTCWT} coefficient $\frac{|c_j[k]|}{\|f\|_2}$.

The first two parameters are close to zero for most modulated wavelets,  because the overall frequency $\omega_0$ is very high compared to the slowly varying localization window $w$, which is for example illustrated in Figure \ref{fig:testsetting}. We can thus conclude that $|W_h/E_h|\approx 0$ for all significant coefficients $c_j[k]$ when $h$ is small.

Clearly,
\begin{align}
\label{Eheq}
|E_h| &=   2|e^{i2^{j}\omega_0 h}-1|\,|c_j[k]|.
\intertext{By defining $\alpha_h$ as the unique angle in $[0,\pi[$ that satisfies}
|W_h| &= 2|e^{i\alpha_h}-1|\, |c_j[k]|,
\label{Wheq}
\end{align}
we arrive at
\begin{align}
\label{Wheheq}
|W_h/E_h| &= \frac{|e^{i\alpha_h}-1|}{|e^{i2^{j}\omega_0 h}-1|}.
\end{align}

In the next proposition, we show that the ratio $R_h$ corresponding to the phase-compensation $\phi_h = 2^j\omega_0h + \text{sign}(h)\alpha_h$ is approximately equal to $i$  when $h$ is small and $|W_h/E_h|\approx 0$.

\begin{proposition}
\label{prop:phi}
Let $\beta_h\in ]-\pi, \pi]$, such that $\beta_h = 2^{j+1}\omega_0h$ modulo $2\pi$. Define
\begin{align}
\label{phi}
\phi_h = 2^j\omega_0h + \text{sign}(\beta_h)\alpha_h.
\end{align}
If $    \alpha_h < \pi - |\beta_h|$ and $|W_h/E_h|<1$, then
\begin{align}
R_h=\frac{e^{i\phi_h}a_j[k]-a_j^h[k]}{e^{i\phi_h}b_j[k]-b_j^h[k]} = i\, \frac{1+K_h}{1-K_h},
\end{align}
where $K_h$ is a complex number that satisfies
\begin{align*}
|K_h| \leq \frac{2|W_h/E_h|}{|e^{i2^j\omega_0 h} +1|-|W_h/E_h|}.
\end{align*}
\end{proposition}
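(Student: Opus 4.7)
The plan is to reduce $R_h$ to a single complex ratio $K_h := A/B$, where $A := e^{i\phi_h}c_j[k] - c_j^h[k]$ and $B := e^{i\phi_h}\overline{c_j[k]} - \overline{c_j^h[k]}$, and then to bound $|A|$ from above and $|B|$ from below via the triangle inequality applied to explicit expressions for $A$ and $B$.

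First, since $f$ is real-valued, the coefficients $a_j^h[k]$ and $b_j^h[k]$ are real, so $a_j[k] = c_j[k] + \overline{c_j[k]}$ and $b_j[k] = i(c_j[k] - \overline{c_j[k]})$, and the same relations hold for the translated versions. Substituting into (\ref{Rh}) yields $e^{i\phi_h}a_j[k] - a_j^h[k] = A + B$ and $e^{i\phi_h}b_j[k] - b_j^h[k] = i(A-B)$, so $R_h = -i(A+B)/(A-B) = i(1+A/B)/(1-A/B)$, which identifies $K_h = A/B$.

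Next, I expand $A$ and $B$. Splitting the integrand of $c_j[k] - c_j^h[k]$ into a ``sinusoid only'' contribution (whose integral is $E_h$) and a ``window only'' contribution (whose integral is $e^{i 2^j\omega_0 h}W_h$), and factoring $(1 - e^{i 2^j\omega_0 h})$ out of the integral defining $E_h$ to get $E_h = 2e^{i\xi_0}c_j[k](1-e^{i 2^j\omega_0 h})$ (consistent with (\ref{Eheq})), I obtain
\begin{align*}
c_j[k] - c_j^h[k] = c_j[k]\bigl(1 - e^{i 2^j\omega_0 h}\bigr) + \tfrac{e^{-i\xi_0}}{2} e^{i 2^j\omega_0 h} W_h.
\end{align*}
Plugging this into $A$ and using the phase choice $\phi_h = 2^j\omega_0 h + \text{sign}(\beta_h)\alpha_h$ rearranges to
\begin{align*}
A = e^{i 2^j\omega_0 h}\Bigl[(e^{i\,\text{sign}(\beta_h)\alpha_h}-1)\,c_j[k] + \tfrac{e^{-i\xi_0}}{2}W_h\Bigr].
\end{align*}
The analogous computation on the conjugate coefficients (effectively flipping the sign of $\omega_0$, so that $E_h$ and $W_h$ are replaced by $\overline{E_h}$ and $\overline{W_h}$) together with the congruence $2\cdot 2^j\omega_0 h \equiv \beta_h \pmod{2\pi}$ yields
\begin{align*}
B = e^{-i 2^j\omega_0 h}\Bigl[(e^{i(\beta_h + \text{sign}(\beta_h)\alpha_h)}-1)\,\overline{c_j[k]} + \tfrac{e^{i\xi_0}}{2}\overline{W_h}\Bigr].
\end{align*}

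For the final estimate, the triangle inequality and the identity $|W_h| = 4|c_j[k]|\sin(\alpha_h/2)$ from (\ref{Wheq}) yield $|A| \leq 4|c_j[k]|\sin(\alpha_h/2)$. The hypothesis $\alpha_h < \pi - |\beta_h|$ places $(|\beta_h|+\alpha_h)/2$ in $[0,\pi/2)$, so the reverse triangle inequality gives $|B| \geq 2|c_j[k]|\bigl(\sin((|\beta_h|+\alpha_h)/2) - \sin(\alpha_h/2)\bigr)$, which by the monotonicity of $\sin$ on $[0,\pi/2]$ is at least $2|c_j[k]|\bigl(\sin(|\beta_h|/2) - \sin(\alpha_h/2)\bigr)$. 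Dividing the bounds and translating back via the elementary identities $\sin(|\beta_h|/2) = |\sin(2^j\omega_0 h)|$, $|W_h/E_h| = \sin(\alpha_h/2)/|\sin(2^{j-1}\omega_0 h)|$, and $|e^{i 2^j\omega_0 h}+1| = 2|\cos(2^{j-1}\omega_0 h)|$ produces the stated bound.

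The main technical hurdle is the bookkeeping of phases in the rewriting of $A$ and $B$: the precise choice $\phi_h = 2^j\omega_0 h + \text{sign}(\beta_h)\alpha_h$ is calibrated so that the two terms inside the bracket defining $A$ have the same magnitude, namely $2|c_j[k]|\sin(\alpha_h/2)$, forcing $|A|$ to be of the order $|W_h|/|c_j[k]|$ and hence small whenever the window is slowly varying; once this cancellation is in place, the remainder of the argument reduces to elementary trigonometry.
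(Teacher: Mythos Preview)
Your proof is correct and follows essentially the same route as the paper. Your quantities $A$ and $B$ are precisely the paper's $\langle\Delta,\Psi_{j,k}\rangle$ and $\langle\Delta,\overline{\Psi}_{j,k}\rangle$ for $\Delta=e^{i\phi_h}f-f^h$, your explicit formulas for $A$ and $B$ coincide with the paper's after factoring out $e^{\pm i2^j\omega_0 h}$, and your final trigonometric estimate $\sin((|\beta_h|+\alpha_h)/2)\geq\sin(|\beta_h|/2)$ is the half-angle reformulation of the paper's inequality $|e^{i(\beta_h+\alpha_h)}-1|\geq|e^{i\beta_h}-1|$; the only difference is cosmetic (sines versus $|e^{i\theta}-1|$ throughout).
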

\begin{proof}
We prove the proposition for $\beta_h>0$.
Define $\Delta = e^{i\phi_h}f - f^h$. Since $\psi = \Psi + \overline{\Psi}$ and $\psi' = -i(\Psi - \overline{\Psi})$ by construction, we obtain that
\begin{align*}
e^{i\phi_h}a_j[k]-a_j^h[k] &= \bigl<\Delta,\Psi_{j,k}\bigr> + \bigl< \Delta, \overline{\Psi}_{j,k}\bigr>
 \intertext{and}
e^{i\phi_h}b_j[k]-b_j^h[k] &= i\bigl<\Delta,\Psi_{j,k}\bigr> - i\bigl< \Delta, \overline{\Psi}_{j,k}\bigr>.
\end{align*}
This shows that $R_h=i(1+K_h)/(1-K_h)$, with
\begin{align*}
K_h &= \frac{\bigl<\Delta, \Psi_{j,k}\bigr>}{\bigl< \Delta, \overline{\Psi}_{j,k}\bigr>}.
\end{align*}
A simple calculation gives
\begin{align*}
\bigl< \Delta, \Psi_{j,k} \bigr> &=  e^{i\phi_h}\bigl<f,\Psi_{j,k}\bigr>-\bigl<f,\Psi_{j,k}(\cdot-h)\bigr>\\
&= \int_{\R} f(x) \Xi_{j,k}\bigl[e^{i\phi_h}\overline{\Psi}(x)-\overline{\Psi}(x-2^jh)\bigr]dx\\
&= \frac{e^{-i\xi_0}}{2}\int_{\R} f(x)  \Xi_{j,k}\Bigl[e^{-i\omega_0x}\bigl[e^{i\phi_h}w(x)-e^{i2^j\omega_0h}w(x-2^jh)\bigr]\Bigr]\\
&= \frac{e^{i(2^j\omega_0h-\xi_0)}}{2} W_h + \bigl(e^{i\phi_h}-e^{i2^j\omega_0h}\bigr)c_j[k] .
\end{align*}
Similarly,
\begin{align*}
\bigl< \Delta, \overline{\Psi}_{j,k} \bigr> &=   \frac{e^{i(\xi_0-2^j\omega_0h)}}{2} \overline{W_h} +\bigl(e^{i\phi_h}-e^{-i2^j\omega_0h}\bigr)\overline{c_j[k]}.
\end{align*}
Hence,
\begin{align*}
|K_h| &\leq \frac{\bigl|W_h\bigr| + 2\bigl|e^{i(\phi_h-2^j\omega_0h)}-1\bigr|\, \bigl|c_j[k]\bigr|}{\Bigl| \bigl|W_h\bigr| - 2\bigl|e^{i(\phi_h+2^j\omega_0h)}-1\bigr|\, \bigl|c_j[k]\bigr|\Bigr|}.
\intertext{The substitution $\phi_h = 2^j\omega_0h + \alpha_h$ gives}
|K_h| &\leq \frac{\bigl|W_h\bigr| + 2\bigl|e^{i\alpha_h}-1\bigr|\, \bigl|c_j[k]\bigr|}{\Bigl| \bigl|W_h\bigr| - 2\bigl|e^{i(2^{j+1}\omega_0h+\alpha_h)}-1\bigr|\, \bigl|c_j[k]\bigr|\Bigr|}.
\end{align*}
Since
\begin{align*}
|e^{i(2^{j+1}\omega_0h+\alpha_h)}-1|^2 &= 2-2\cos(\beta_h+  \alpha_h)\\
&\geq 2-2\cos(\beta_h)\\
&= |e^{i2^{j+1}\omega_0h}-1|^2
\intertext{we have}
\frac{1}{\bigl|e^{i(2^{j+1}\omega_0h+\alpha_h)}-1\bigr|} &\leq  \frac{1}{\bigl|e^{i2^{j+1}\omega_0h}-1\bigr|}\\
&= \frac{1}{\bigl|e^{i2^j\omega_0h}-1\bigr|\, \bigl|e^{i2^j\omega_0 h}+1\bigr|}.
\end{align*}
This gives us, in combination with Equation (\ref{Wheq}),
\begin{align*}
\frac{\bigl|W_h\bigr|}{\bigl|e^{i(2^{j+1}\omega_0h + \alpha_h)}-1\bigr|}&\leq 2\frac{\bigl|W_h/E_h\bigr|}{\bigl|e^{i2^j\omega_0 h}+1\bigr|}|c_j[k]|.
\intertext{On the other hand,}
\frac{\bigl|W_h\bigr|}{\bigl|e^{i\alpha_h}-1\bigr|}&\leq 2|c_j[k]|.
\end{align*}
We arrive at the bound
\begin{align*}
|K_h|
&\leq
\frac{4\bigl|c_j[k]\bigr|\, \bigl|W_h/E_h\bigr|}{2\bigl|c_j[k]\bigr|\, \bigl|e^{i2^j\omega_0h}+1\bigr| - 2\bigl|c_j[k]\bigr|\, \bigl|W_h/E_h\bigr|}\\
&= \frac{2|W_h/E_h|}{|e^{i2^j\omega_0 h} +1|-|W_h/E_h|}.
\end{align*}
This finishes the proof.\end{proof}

The phase-compensation $\phi_h$ proposed in (\ref{phi}) may be hard to determine in practice. Instead,  when $h$ is small and $|W_h/E_h|\approx 0$, we can put $\phi_h = 2^j\omega_0h$.
Indeed, Formula (\ref{Wheheq}) implies that
\begin{align*}
\lim_{h\rightarrow 0} |W_h/E_h| &= \frac{1}{2^j\omega_0} \lim_{h\rightarrow 0} \frac{\alpha_h}{|h|}.
\end{align*}
As a consequence,
\begin{align}
\label{phiid}
\phi_h/2^j\omega_0 h =1 + sign(h)\alpha_h/2^j\omega_0h\approx 1+ |W_h/E_h|\approx 1.
\end{align}
Figure \ref{fig:Rh} plots the ratio $R_h$ for a number of {\DTCWT} coefficients, using the same test configuration as described in Section \ref{sec:preliminaries}. We see that  the values of $R_h$ are indeed close to $i$. Also note that the accuracy of the approximation depends on the magnitude of the {\DTCWT} coefficients. This phenomenon is to be expected, as the size of $|W_h/E_h|$ increases when the significance of the coefficient $c_j[k]$ decreases (see Proposition \ref{prop:limWhEh}).

\begin{proof}[Proof of Theorem \ref{thm:phasecompensation}]
Let $\phi_h=2^j\omega_0 h$. As noted after Proposition \ref{prop:limWhEh}, we may assume that $|W_h/E_h|\approx 0$. Then (\ref{phiid}) shows that this instance of $\phi_h$ is equivalent to the one specified in (\ref{phi}). Hence, Proposition \ref{prop:phi} is applicable, resulting in $R_h\approx i$. Substitution  into (\ref{DTCWTboundreal}) and (\ref{DTCWTboundimag}) of Proposition \ref{prop:ratiorealcomplex} reveals that  the ratios $\frac{|e^{i2^j\omega_0h}c_j[k] - c_j^h[k]|}{|a_j[k]-a_j^h[k]|}$ and $\frac{|e^{i2^j\omega_0h}c_j[k]- c_j^h[k]|}{|b_j[k]-b_j^h[k]|}$ are both approximately zero. This is exactly what we needed to prove.
\end{proof}

\begin{figure}
\centering
\includegraphics[height=6.7cm]{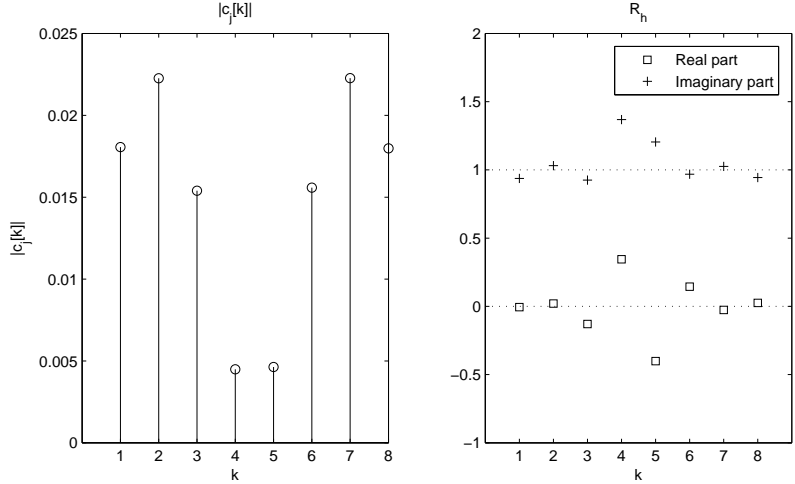}
\caption{Plot of  the ratio $R_h$ for $h=1/512$ and $j=3$. Observe that $R_h$ differs more from $i$ for $k=4,5$.  }
\label{fig:Rh}
\end{figure}

\section{On the decaying rate of the phase-compensated shift error}

In this section, we focus on the dual-tree complex wavelet transform for which the wavelet systems $\{\psi_{j,k}\}_{j,k\in\Z}$ and $\{\psi_{j,k}'\}_{j,k\in\Z}$ are both \emph{orthonormal} and \emph{modulated}. In order to estimate the prediction error $c_j^h[k] \approx e^{i2^j\omega_0h} c_j[k]$, we will introduce a new quantitative bound for the phase-compensated shift error $|e^{i2^j\omega_0h}c_j[k] - c_j^h[k]|$. Moreover, this bound allows us to describe the decaying rate of the phase-compensated shift error as $h\rightarrow 0$.

In \cite{chaud1}, the \emph{fractional Hilbert Transform} (fHT) operator is introduced in order to deduce an amplitude-phase representation of the {\DTCWT}.
The fHT corresponding to the real-valued shift $\tau$ is defined as
$$\mathcal{H}_{\tau} = \cos(\pi\tau)\mathcal{I} - \sin(\pi\tau)\mathcal{H},$$
where $\mathcal{I}$ is the identity operator. Note that for $\tau=-1/2$, we retrieve the original
Hilbert transform operator. Moreover, $\mathcal{H}_{\tau}[\cos(\omega_0 x)] = \cos(\omega_0 x + \pi\tau)$. It is easy to show that the fHT is a unitary operator that commutes with translations and positive dilatations. In particular,    if $\{\psi_{j,k}\}_{j,k\in\Z}$ is a wavelet system, then $\bigl\{\mathcal{H}_{\tau}[\psi_{j,k}]\bigr\}_{j,k\in\Z}$ is also a wavelet system.

\begin{theorem}[Amplitude-phase representation of the {\DTCWT}, \cite{chaud1}]
Let $f$ be a function in $L^2(\R)$ with {\DTCWT} coefficients $c_j[k] = |c_j[k]|e^{i\omega_j[k]}$. Then
\begin{align}
\label{AmphPhase}
f &= \sum_{j,k\in\Z} |c_j[k]|  \ \Xi_{j,k}[w \; C(\omega_{j}[k])],
\end{align}
where, for $\omega\in [0,2\pi[$, $C(\omega)$ is given by
\begin{align*}
C(\omega)(x)= \cos(\omega_0 x + \xi_0 + \omega).
\end{align*}
\end{theorem}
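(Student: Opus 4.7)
The plan is to derive the amplitude-phase expansion directly by combining the two orthonormal wavelet reconstructions, substituting the modulated forms of $\psi$ and $\psi'$, and then collapsing the resulting sum of a cosine and a sine into a single phase-shifted cosine via the standard trigonometric identity.

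First I would invoke orthonormality. Since the two wavelet systems $\{\psi_{j,k}\}$ and $\{\psi'_{j,k}\}$ are assumed orthonormal, the dual wavelets $\tilde{\psi}_{j,k}$ and $\tilde{\psi}'_{j,k}$ in the reconstruction identities from Section \ref{sec:preliminaries} coincide with $\psi_{j,k}$ and $\psi'_{j,k}$ themselves. Averaging the two resulting identities yields
\begin{align*}
f = \tfrac{1}{2}\sum_{j,k\in\Z} a_j[k]\,\psi_{j,k} + \tfrac{1}{2}\sum_{j,k\in\Z} b_j[k]\,\psi'_{j,k},
\end{align*}
with both series converging in $L^2(\R)$ by Parseval's identity in each basis.

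Next I would translate the real DWT coefficients into the polar data of the complex coefficients. From $c_j[k]=\tfrac{1}{2}(a_j[k]-ib_j[k])$ with $a_j[k], b_j[k]\in\R$, reading off real and imaginary parts of $c_j[k]=|c_j[k]|e^{i\omega_j[k]}$ gives
\begin{align*}
a_j[k] = 2|c_j[k]|\cos(\omega_j[k]), \qquad b_j[k] = -2|c_j[k]|\sin(\omega_j[k]).
\end{align*}
In parallel, I would use the modulated structure of $\psi$ together with the Bedrosian-identity consequence $\psi'(x)=w(x)\sin(\omega_0x+\xi_0)$, already recorded in Section \ref{sec:preliminaries}, to write $\psi_{j,k}=\Xi_{j,k}[w\cdot C_0]$ and $\psi'_{j,k}=\Xi_{j,k}[w\cdot S_0]$, where $C_0(x)=\cos(\omega_0x+\xi_0)$ and $S_0(x)=\sin(\omega_0x+\xi_0)$.

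Substituting these two expressions into the averaged identity and using the linearity of $\Xi_{j,k}$ yields
\begin{align*}
f = \sum_{j,k\in\Z} |c_j[k]|\,\Xi_{j,k}\bigl[w\cdot\bigl(\cos(\omega_j[k])\,C_0 - \sin(\omega_j[k])\,S_0\bigr)\bigr].
\end{align*}
The final step is the trigonometric identity $\cos(\omega_0x+\xi_0)\cos\omega-\sin(\omega_0x+\xi_0)\sin\omega=\cos(\omega_0x+\xi_0+\omega)$, applied pointwise inside $\Xi_{j,k}$ with $\omega=\omega_j[k]$, which converts the bracketed factor into exactly $C(\omega_j[k])$ and produces the desired formula (\ref{AmphPhase}).

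I do not expect a serious obstacle here: the argument is essentially a rewriting. The only delicate bookkeeping point is to remember that $\Xi_{j,k}$ acts by dilation and translation \emph{inside} the argument of the cosine, so the phase $\omega_j[k]$ must be inserted into the cosine \emph{before} applying $\Xi_{j,k}$ (producing $\cos(\omega_0(2^jx-k)+\xi_0+\omega_j[k])$ after dilation) rather than as an outer modulus-one multiplier; once that is recognized, the computation goes through mechanically and the orthonormality plus the modulated-wavelet hypotheses carry the proof.
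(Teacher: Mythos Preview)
The paper does not supply its own proof of this theorem: it is quoted from \cite{chaud1} and used as a tool. So there is nothing in the paper to compare your argument against line by line. That said, your derivation is correct and is in fact exactly the computation the paper carries out implicitly inside the proof of Proposition~\ref{prop:ampphaseid}: there the authors write $\tfrac{1}{2}(\tilde f_1+\tilde f_2)$ as $\sum |c_j[k]|\cos(\cdot)\psi_{j,k}-\sum |c_j[k]|\sin(\cdot)\psi'_{j,k}$ and collapse it to $\sum |c_j[k]|\,\Xi_{j,k}[w\,C(\cdot)]$, which is precisely your averaging-plus-angle-addition step specialized to the phase $\omega_j[k]+2^j\omega_0 h$. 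Your use of orthonormality to identify the dual wavelets with $\psi_{j,k},\psi'_{j,k}$ is also the right reading of the Section~4 hypotheses. One small caveat: your identification $a_j[k]=2|c_j[k]|\cos\omega_j[k]$, $b_j[k]=-2|c_j[k]|\sin\omega_j[k]$ tacitly assumes $a_j[k],b_j[k]\in\R$, i.e.\ that $f$ is real-valued; the theorem as stated omits this, but it is the standing assumption throughout the paper and is needed for the polar decomposition to match the real/imaginary split.
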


Using this theorem, the authors of \cite{chaud1} provided new insights on the shiftability of the {\DTCWT}. Indeed,
Formula (\ref{AmphPhase}) gives an explicit interpretation of the phase parameter $\omega_j[k]$ as the phase-shift applied to the modulated sinusoid of the wavelet. More precisely, when $f$ is shifted over $h$, we get an amplitude-phase representation of the form
$$f^h = \sum_{j,k\in\Z} |c_j^h[k]| \Xi_{j,k}[w\, C(\omega_j^h[k])]$$
Hence, the localization window $w$ is kept fixed at scale $j$ while the oscillation is now shifted over $\omega_j^h[k]$ to better fit the underlying signal singularities/transitions.

We now extend upon their findings by employing the phase-compensated shift error to characterize the  shift errors $|e^{i2^j\omega_0h} c_j[k] -c_j^h[k]|$, as stated in the next proposition. This result will be applied in Theorem \ref{thm:shifterrordecay} to estimate the prediction error $c_j^h[k] \approx e^{i2^j\omega_0 h} c_j[k]$.

\begin{proposition}
\label{prop:ampphaseid}
Let $f$ be a real-valued function in $L^2(\R)$ with {\DTCWT} coefficients $c_j[k]=|c_j[k]|e^{i\omega_j[k]}$. Consider a translate $f^h = f(\cdot + h)$ with {\DTCWT} coefficients $c_j^h[k] = |c_j^h[k]|e^{i\omega_j^h[k]}$.
Then
\begin{equation*}
\begin{split}
\sum_{j\in J, k\in K} |e^{i2^j\omega_0 h} c_j[k] - c_{j}^h[k]|^2 &= \sqrt{\frac{\epsilon_1^2+\epsilon_2^2}{2}}
\end{split}
\end{equation*}
where $J,K\subseteq\Z$,
\begin{align*}
\epsilon_1 &= \left\|\sum_{j\in J, k\in K}|c_j[k]   |  \ \Xi_{j,k}\left[(w-w(\cdot+2^jh))C(\omega_j[k] + 2^j\omega_0 h) \right]\right\|_2
\intertext{and}
\epsilon_2 &=\left\|
 \sum_{j\in J, k\in K} |c_j[k]|  \ \Xi_{j,k}\left [(w-w(\cdot+2^jh))C(-\omega_j[k] -2^j\omega_0 h) \right] \right\|_2.
\end{align*}
\end{proposition}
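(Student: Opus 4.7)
The plan is to convert the discrete sum on the left into $L^2$-norms of continuous signals by invoking orthonormality, and then to recognize those norms as $\epsilon_1$ and $\epsilon_2$ by exploiting the modulated-wavelet structure together with the amplitude-phase representation (\ref{AmphPhase}) and its ``anti-analytic'' counterpart. First, I split $c_j[k]=\tfrac{1}{2}(a_j[k]-ib_j[k])$ and $c_j^h[k]=\tfrac{1}{2}(a_j^h[k]-ib_j^h[k])$ and set $\tilde a_j[k]:=2\,\mathrm{Re}(e^{i2^j\omega_0 h}c_j[k])$ and $\tilde b_j[k]:=-2\,\mathrm{Im}(e^{i2^j\omega_0 h}c_j[k])$, yielding the real-valued identity $4|e^{i2^j\omega_0 h}c_j[k]-c_j^h[k]|^2 = (\tilde a_j[k]-a_j^h[k])^2+(\tilde b_j[k]-b_j^h[k])^2$. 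Summing over $(j,k)\in J\times K$ and applying Parseval to the orthonormal bases $\{\psi_{j,k}\}$ and $\{\psi'_{j,k}\}$ separately converts the left-hand side of the proposition to $\tfrac{1}{4}(\|A\|_2^2+\|B\|_2^2)$, where $A$ and $B$ are the $\psi$- and $\psi'$-reconstructions of the two difference-coefficient sequences.

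Second, I apply the product-to-sum trigonometric identities to the modulated forms $\psi(x)=w(x)\cos(\omega_0 x+\xi_0)$ and $\psi'(x)=w(x)\sin(\omega_0 x+\xi_0)$ to decompose each of $\tilde a_j[k]\psi_{j,k}$, $\tilde b_j[k]\psi'_{j,k}$, $a_j^h[k]\psi_{j,k}$, $b_j^h[k]\psi'_{j,k}$ into a ``$+\omega$'' amplitude-phase piece $|c_j[k]|\,\Xi_{j,k}[w\,C(\omega_j[k]+2^j\omega_0 h)]$ and a ``$-\omega$'' piece $|c_j[k]|\,\Xi_{j,k}[w\,C(-\omega_j[k]-2^j\omega_0 h)]$, the two pieces adding for the products involving $\psi$ and subtracting for those involving $\psi'$. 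Using (\ref{AmphPhase}) applied to $f^h$, the positive-phase pieces of $a_j^h[k]\psi_{j,k}$ and $b_j^h[k]\psi'_{j,k}$ reconstruct $f^h$ while the negative-phase pieces cancel (because $f^h$ is real). Adding and subtracting the resulting series gives
\[
A+B = 2\bigl(\mathrm{Pred}-f^h\bigr),\qquad A-B = 2\,\mathrm{Pred}^-,
\]
where $\mathrm{Pred}:=\sum|c_j[k]|\,\Xi_{j,k}[w\,C(\omega_j[k]+2^j\omega_0 h)]$ and $\mathrm{Pred}^-$ is its negative-phase analogue. Pushing the translation by $h$ through (\ref{AmphPhase}) rewrites $f^h$ as $\sum|c_j[k]|\,\Xi_{j,k}[w(\cdot+2^jh)\,C(\omega_j[k]+2^j\omega_0 h)]$, so $\mathrm{Pred}-f^h$ factors as the window-difference sum inside $\epsilon_1$ and $\|A+B\|_2 = 2\epsilon_1$. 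A parallel argument, starting from the anti-analytic vanishing identity for $f^h$ and transporting it back to the original coefficients of $f$, identifies $\mathrm{Pred}^-$ with the negative-phase window-difference sum and gives $\|A-B\|_2 = 2\epsilon_2$. The parallelogram identity $\|A\|_2^2+\|B\|_2^2 = \tfrac{1}{2}(\|A+B\|_2^2+\|A-B\|_2^2)$ then combines the two equalities into the claimed right-hand side.

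The main obstacle is the negative-phase identification $\|A-B\|_2=2\epsilon_2$. The positive-phase side is a direct consequence of translating the amplitude-phase formula, but the negative-phase side requires carefully pairing the phase $C(-\omega_j[k]-2^j\omega_0 h)$ with the anti-analytic cancellation for the real signal $f^h$ and tracking the resulting sign conventions as the translation passes through the window $w\mapsto w(\cdot+2^jh)$; only once this is in place does the parallelogram identity close the argument.
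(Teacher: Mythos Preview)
Your proposal is correct and follows essentially the same route as the paper: split into real/imaginary parts via $\tilde a_j[k],\tilde b_j[k]$, pass to $L^2$-norms by orthonormality, apply the parallelogram law, and identify the two resulting norms with $\epsilon_1$ and $\epsilon_2$ using the amplitude--phase representation of $f^h$ and its anti-analytic (negative-phase) counterpart. The only minor point to watch is that your use of ``reconstruct $f^h$'' is literally valid only when $J=K=\Z$; for general $J,K$ the paper replaces $f^h$ by the partial reconstructions $f_1^h=\sum_{J,K}a_j^h[k]\psi_{j,k}$ and $f_2^h=\sum_{J,K}b_j^h[k]\psi'_{j,k}$, which is exactly the adjustment your outline needs as well.
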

\newpage
\begin{proof}
Let us first prove the theorem for $J=K=\Z$. Recall that $c_j[k] = \frac{1}{2}(a_j[k]-ib_j[k])$, where $a_j[k]$ and $b_j[k]$ are the coefficients of $f$ corresponding to the real and imaginary wavelet components respectively. The same formula for $\tilde{c}_j[k] = e^{i2^j\omega_0h}c_j[k]$ can be obtained by defining
\begin{align*}
\tilde{a}_j[k] &= 2 |c_j[k]|\cos(\omega_j[k] + 2^j\omega_0 h)
\intertext{and}
\tilde{b}_j[k] &= - 2 |c_j[k]|\sin(\omega_j[k] + 2^j\omega_0 h).
\end{align*}
 Now consider the functions $\tilde{f}_1$ and $\tilde{f}_2$, given by
$$\tilde{f}_1 = \sum_{j,k\in\Z} \tilde{a}_j[k] \psi_{j,k} \quad \text{and}\quad \tilde{f}_2 = \sum_{j,k\in\Z} \tilde{b}_j[k] \psi_{j,k}'.$$
Observe that
\begin{align*}
\sum_{j,k\in\Z}  |e^{i2^j\omega_0 h}c_j[k] - c_{j}^h[k]|^2 
&= \frac{1}{4}\|\tilde{f}_1 - f^h\|_2^2 + \frac{1}{4}\|\tilde{f}_2 - f^h\|_2^2 \\
&= \frac{1}{2}\left\|\frac{\tilde{f}_1 + \tilde{f}_2}{2} - f^h\right\|_2^2 + \frac{1}{2}\left\|\frac{\tilde{f}_1 - \tilde{f}_2}{2}\right\|_2^2
\end{align*}
where the last equality is a consequence of the parallelogram-law.
Since
\begin{align*}
\frac{1}{2}(\tilde{f}_1 + \tilde{f}_2) &= \sum_{j,k\in\Z} |c_{j}[k]| \cos(\omega_j[k] + 2^j\omega_0 h) \psi_{j,k}\\
&\quad\quad - \sum_{j,k\in\Z} |c_{j}[k]| \sin(\omega_j[k] + 2^j\omega_0 h) \psi_{j,k}' \\
&= \sum_{j,k\in\Z} |c_j[k]|  \ \Xi_{j,k}\left[w\; C(\omega_j[k] + 2^j\omega_0 h) \right]
\intertext{and}
f^h &= \sum_{j,k\in\Z}  \ |c_j[k]| \Xi_{j,k}[w(\cdot+2^j h) \; C(\omega_{j}[k]+ 2^j\omega_0 h)]
\end{align*}
we obtain that
\begin{align*}
\left\|\frac{\tilde{f}_1 + \tilde{f}_2}{2} - f^h\right\|_2 = \epsilon_1.
\end{align*}
On the other hand, we have the relations
\begin{align*}
\frac{1}{2}(\tilde{f}_1 - \tilde{f}_2) &= \sum_{j,k\in\Z} |c_j[k]|  \ \Xi_{j,k}\left[w\; C(-\omega_j[k] - 2^j\omega_0 h) \right]
\intertext{and}
\frac{1}{2}(f^h_1 - f^h_2) &= \sum_{j,k\in\Z} |c_j[k]|  \ \Xi_{j,k}\left[w(\cdot+ 2^j h) \; C(-\omega_j[k]-2^j\omega_0 h) \right]
\end{align*}
for\begin{align*}
f^{h}_1 = \sum_{j,k\in\Z} a_j^h[k] \psi_{j,k}\quad \text{and}\quad f^h_2=\sum_{j,k\in\Z}b_j^h[k]\psi'_{j,k}.
\end{align*}
 Note that  $f^h_1=f^h_2=f^h$ by definition. Hence, we can conclude that \begin{align*}
\left\|\frac{\tilde{f}_1 - \tilde{f}_2}{2}\right\|_2 &=
\left\|\frac{\tilde{f}_1 - \tilde{f}_2}{2}- \frac{\tilde{f}^h_1 - \tilde{f}^h_2}{2}\right\|_2= \epsilon_2.
\end{align*}
This proves the theorem for $J=K=\Z$.

For the general case, we replace the previous definitions of $\tilde{f}_1$, $\tilde{f}_2$, $f_1^h$ and $f^h_2$ by
\begin{align*}
\tilde{f}_1 &= \sum_{j\in J, k\in K} \tilde{a}_j[k] \psi_{j,k}\, ; &\tilde{f}_2 &=\sum_{j\in J, k\in K} \tilde{b}_j[k] \psi_{j,k}';\\
f^h_1 &= \sum_{j\in J, k\in K} a_j^h[k] \psi_{j,k}\, ;
&f^h_2 &= \sum_{j\in J, k\in K} b_j^h[k] \psi_{j,k}'.
\end{align*}
A similar calculation as before shows that \begin{align*}
\sum_{j\in J, k\in K}  &|e^{i2^j\omega_0h}c_j[k] - c_{j}^h[k]|^2\\
&= \frac{1}{4}\|\tilde{f}_1 - f^h_1\|_2^2 + \frac{1}{4}\|\tilde{f}_2 - f^h_2\|_2^2 \\
&= \frac{1}{2}\left\|\frac{\tilde{f}_1 + \tilde{f}_2}{2} - \frac{f^h_1 + f^h_2}{2}\right\|_2^2 + \frac{1}{2}\left\|\frac{\tilde{f}_1 - \tilde{f}_2}{2}- \frac{f^h_1 - f^h_2}{2}\right\|_2^2 \\
&= \frac{\epsilon_1^2 + \epsilon_2^2}{2} \tag*{\qedhere}
\end{align*}
\end{proof}

\noindent Proposition \ref{prop:ampphaseid} confirms the importance of a smooth localization window $w$ in order to minimize the {\DTCWT} shift error as previously indicated by Proposition \ref{prop:limWhEh}.

One way to measure the oscillatory behavior of $w$ is by using the concept of \emph{Lipschitz continuity}. By definition, $w$ is called an $\ell$-Lipschitz function if and only if $|w(y)-w(x)|\leq \ell |y-x|$ for every $x,y\in\R$. This leads us to the main theorem of this section, which is an immediate consequence of Proposition \ref{prop:ampphaseid}.

\begin{theorem}[Decaying rate of the {\DTCWT} shift error]
\label{thm:shifterrordecay}
Let $f$ be a real-valued function in $L^2(\R)$ with {\DTCWT} coefficients $c_j[k]$. Consider a translate $f^h = f(\cdot + h)$ with {\DTCWT} coefficients $c_j^h[k]$. If the localization window $w$ is a compactly supported $\ell$-Lipschitz function, so that $w-w(\cdot+2^jh)$ is zero outside $[p,q]$, then\begin{align}
\label{shifterrordecay}
\frac{|e^{i2^j\omega_0 h} c_j[k] - c_{j}^h[k]|}{|hc_j[k]|}
&\leq 2^j \ell(q-p)
\end{align}
for every $j,k\in\Z$.
\end{theorem}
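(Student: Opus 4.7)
The approach is to specialize Proposition~\ref{prop:ampphaseid} to the singleton index sets $J=\{j\}$ and $K=\{k\}$. This collapses the left-hand side to a single squared term, and (in the form actually derived at the end of that proof) reduces the identity to
$$|e^{i2^j\omega_0 h}c_j[k]-c_j^h[k]|^2 \;=\; \frac{\epsilon_1^2+\epsilon_2^2}{2},$$
with each $\epsilon_i$ becoming a single-wavelet expression
$$\epsilon_i \;=\; |c_j[k]|\,\bigl\|\Xi_{j,k}\bigl[(w-w(\cdot+2^jh))\,C(\pm(\omega_j[k]+2^j\omega_0 h))\bigr]\bigr\|_2.$$

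Next, each $\epsilon_i$ is estimated by peeling off the translation-dilation. Since $\Xi_{j,k}$ is an $L^2$-isometry (a direct change of variables on $\|\Xi_{j,k}[g]\|_2$ gives $\|g\|_2$), the norm simplifies to $\|(w-w(\cdot+2^jh))\,C(\cdot)\|_2$. The cosine factor $C(\omega)(x)=\cos(\omega_0 x+\xi_0+\omega)$ satisfies $|C(\omega)(x)|\le 1$, and the $\ell$-Lipschitz hypothesis on $w$ gives the pointwise bound $|w(x)-w(x+2^jh)|\le 2^j\ell\,|h|$. Combined with the assumption that $w-w(\cdot+2^jh)$ is supported in $[p,q]$, this produces
$$\bigl\|(w-w(\cdot+2^jh))\,C(\cdot)\bigr\|_2^2 \;\le\; (2^j\ell\,h)^2\,(q-p),$$
and hence $\epsilon_i \le |c_j[k]|\,2^j\ell\,|h|\,\sqrt{q-p}$ for $i=1,2$.

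Substituting these bounds into the displayed identity and taking square roots yields $|e^{i2^j\omega_0 h}c_j[k]-c_j^h[k]| \le 2^j\ell\,|h|\,\sqrt{q-p}\,|c_j[k]|$; dividing by $|h\,c_j[k]|$ delivers (up to the factor $\sqrt{q-p}$ versus $(q-p)$) the stated estimate. The main obstacle is essentially bookkeeping rather than ideas: Proposition~\ref{prop:ampphaseid} does all the analytic work, and what remains is to verify that the proposition is genuinely valid for arbitrary $J,K$ — which relies on the orthonormality hypothesis invoked at the start of the section — and then to track carefully that $\Xi_{j,k}$ acts isometrically, that the cosine contribution is controlled by the trivial $L^\infty$ bound, and that the Lipschitz constant $\ell$ together with the support length $q-p$ dominate both $\epsilon_1$ and $\epsilon_2$ simultaneously.
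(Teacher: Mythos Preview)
Your approach is exactly the one the paper intends: the text states that Theorem~\ref{thm:shifterrordecay} is ``an immediate consequence of Proposition~\ref{prop:ampphaseid},'' and specializing that proposition to singleton $J=\{j\}$, $K=\{k\}$, using the $L^2$-isometry of $\Xi_{j,k}$, the bound $|C(\omega)|\le 1$, and the Lipschitz/support hypotheses on $w$ is precisely the intended computation. Your observation that the argument actually yields $2^j\ell\sqrt{q-p}$ rather than $2^j\ell(q-p)$ is accurate---this is a slightly sharper constant than the one printed (at least when $q-p\ge 1$), and your careful use of the identity as derived in the \emph{proof} of Proposition~\ref{prop:ampphaseid} (without the spurious square root appearing in its statement) is also correct.
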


Notice the formal analogy between the {\DTCWT} bound  in (\ref{shifterrordecay}) and the DWT translation sensitivity constraint in (\ref{translationsensitivity}).

\newpage
\section{Conclusion}

In this paper, we quantitatively investigated the shift error of the modulated dual-tree complex wavelet transform, which can be significantly reduced by performing a phase-compensation on the coefficients.

By introducing a formal description for the DWT translation sensitivity in Section 3, we were able to relate the phase-compensated shift error to the shift errors of the real and imaginary wavelet components. This study revealed that the superiority of the {\DTCWT} is attributed to the high overall frequency and the slowly varying localization window of the {\DTCWT}.
The improved shiftability is  particularly noticeable for significant coefficients.

In Section 4, we estimated the decaying rate of the phase-compensated shift error in case of orthonormal and modulated wavelet systems. This allows us to describe the prediction error in a similar way as the formal description of the DWT translation sensitivity.

\section*{Acknowledgment}
This research was supported by the Fund for Scientific Research Flanders (projects G.0206.08, G021311N and the Post-Doctoral Fellowship of Peter Schelkens) and by the Flemish Institute for the Promotion of Innovation by Science and Technology (IWT) (PhD bursary Adriaan Barri).

To conclude, the authors would like to thank the reviewer for his constructive ideas that greatly helped improving the paper.

\section*{References}

\end{document}